\newtheorem{thm}{Theorem}
\newtheorem{lem}[thm]{Lemma}
\newtheorem{prop}[thm]{Proposition}
\theoremstyle{definition}
\newtheorem{defn}[thm]{Definition}
\theoremstyle{remark}
\begin{document}

\title[Gr\"{o}bner-Shirshov Basis for affine Weyl Group $\widetilde{A_n}$]
 {Gr\"{o}bner-Shirshov Basis and Reduced Words for affine Weyl Group $\widetilde{A_n}$}

\author{Erol Y{\i}lmaz, Cenap \"{O}zel and U\v{g}ur Ustao\v{g}lu }

\address{Department of Mathematics, Abant \.{I}zzet Baysal University, Bolu, Turkey}

\email{yilmaz\_e2@ibu.edu.tr, cenap@ibu.edu.tr, ugur1987@gmail.com}

\subjclass[2010]{22E67; 20F55; 51F15; 13P10}

\keywords{Affine Weyl Groups, Gr\"{o}bner-Shirshov Basis, Composition-Diamond Lemma, q-binomials, Basic Partitions}

\date{}

\dedicatory{}

\commby{}


\begin{abstract}
 Using Buchberger-Shirshov Algorithm and Composition-Diamond lemma we obtain the reduced Gr\"{o}bner-Shirshov bases of  $\widetilde{A_n}$ and classify all reduced words of the affine Weyl group  $\widetilde{A_n}$.
\end{abstract}

\maketitle

\section{Introduction}

 Gr\"{o}bner-Shirshov bases and normal form of the elements were already found for the Coxeter groups of type $A_n,B_n$ and $D_n$ in \cite {bokut1}. They also proposed a conjecture for the general form of Gr\"{o}bner-Shirshov bases for all Coxeter groups. In \cite {chen}, an example was given an to show that the
conjecture is not true in general. The Gr\"{o}bner-Shirshov bases of the other finite Coxeter groups are given in \cite {E1} and \cite {E2}. This paper is another example of finding Gr\"{o}bner-Shirshov bases for groups, defined by generators and defining relations.

We first cite some concepts and results from the literature which are related to the Gr\"{o}bner-Shirshov bases for the associative algebras. ( see \cite{bokut2,bokut3,shirshov} )

Suppose  $S$ is a linearly ordered set and $k$ is a field. Let $k\langle S \rangle$ be the free associative algebra over $k$ generated by $S$ and $S^*$ be the free monoid generated by $S$ where empty word is the identity which is denoted by $1$.

Let $S^*$ be equipped with a monomial ordering $<$. It means that $<$ is a well ordering that agrees with left and right multiplications by words:
$$
u>v \Rightarrow w_1uw_2 > w_1vw_2, \; \mathrm{for} \; \mathrm{all} \; w_1,w_2 \in S^*.
$$

A standard example of monomial ordering on $S^*$ is deg-lex ordering which first compare two words by length and then comparing them lexicographically where $S$ is a well-ordered set.

Let $f= \alpha \overline{f} +\sum \alpha_i u_i \in k \langle S \rangle$, where $\alpha, \alpha_i \in k, \overline{f} \in S^*$ and $u_i < \overline{f}$ for each $i$. Then we call $\overline{f}$ the leading word and $f$ monic if $\overline{f}$ has coefficient $1$. For a word $ w \in S^*$, we denote the length of $w$ by $|w|$.

\begin {defn}
For two monic polynomial $f$ and $g$ in $k\langle S\rangle $ and a word $w$, their composition defined by
$$
(f,g)_w=\left\{
  \begin{array}{ll}
    f-agb, & \hbox{if} \; w=\overline{f}=a\overline{g}b\\
    fb-ag, & \hbox{if} \;w=\overline{f}b=a\overline{g},|\overline{f}|+|\overline{g}|>|w|
  \end{array}
\right.
$$
The word $w$ is called the ambiguity of $f$ and $g$. The first type of composition is called the composition of including $g$ in $f$, and the second type is called the composition of intersection of $f$ and $g$. The transformation $f \mapsto f-agb$ is called the elimination of leading word (ELW) of g in f. Let $ R \subset k \langle S\rangle $ be a monic set. The composition $(f,g)_{w}$ is called reduced to $r$ relative to $R$ if $(f,g)_{w}=\sum \alpha_ia_ir_ib_i+r $ where every $\alpha_i \in k, a_i,b_i \in S^{*},r_i \in R$ with $ a_i \overline{r_i} b_i <w $ and the composition $(f,g)_{w}$ is called trivial relative to $R$ if $r=0$.

\end {defn}

The set $R \subset k\langle S\rangle$ is called Gr\"{o}bner-Shirshov bases if any composition of polynomials from $R$ is trivial relative to $R$.

The following lemma was first proved by Shirshov \cite {shirshov} for Lie algebras presented by generators and defining relations. He called it the composition lemma. Similar lemma for free associative algebras was formulated later by Bokut \cite{bokut1} and by Bergman \cite{bergman} under the name "Diamond lemma" after celebrated Newman's Diamond lemma \cite {newman} for graphs. This kind of lemmas are now named as composition-diamond lemmas. We will use Bokut's version of this lemma for free associative algebras. Similar ideas were independently discovered by Hironaka \cite{hironaka} for power series algebras and by  Buchberger \cite {buchberger1, buchberger2} for polynomial algebras.

\begin{lem} \text{(Composition-Diamond Lemma for associative algebras)}

Let $k$ be a field, $ A = k\langle S|R \rangle = k \langle S \rangle /Id(R)$ and $<$ a monomial ordering on
$S^*$, where $Id(R)$ is the ideal of $k\langle S\rangle$ generated by $R$. Then the following statements are
equivalent:

(i) $R$ is a Gr\"{o}bner-Shirshov basis.

(ii) $f \in Id(R) \Rightarrow \overline{f} = a\overline{s}b $ for some $s \in R$ and $a, b \in S^*$.

(iii) The set of $R$-reduced words $$ Red(R) = \{w \in S^*|w \neq a\overline{s}b, a, b \in S^*, s \in R \}$$ is a $k$-linear basis for the algebra
$A=k\langle S|R\rangle$.
\end{lem}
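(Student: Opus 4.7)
The plan is to establish the equivalence by the cyclic chain of implications (i) $\Rightarrow$ (ii) $\Rightarrow$ (iii) $\Rightarrow$ (i), as in Bokut's original proof.

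For (i) $\Rightarrow$ (ii), I would take $f \in Id(R)$ and write $f = \sum_i \alpha_i a_i s_i b_i$ with $s_i \in R$, $a_i,b_i \in S^*$. Setting $w_i = a_i \overline{s_i} b_i$ and $w = \max_i w_i$, the goal is to show $\overline{f} = a \overline{s} b$ for some $s \in R$. The argument proceeds by induction on the pair $(w, N)$ in lexicographic order, where $N$ is the number of indices with $w_i = w$. If $N=1$, the leading word $\overline{f}$ equals $w = a_i \overline{s_i} b_i$ and we are done. Otherwise, I pick two top-degree terms $a_i s_i b_i$ and $a_j s_j b_j$ with $w_i = w_j = w$ and replace one of them using the composition relations. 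Depending on whether $\overline{s_i}$ and $\overline{s_j}$ overlap, include one in the other, or are disjoint in $w$, one obtains respectively an intersection composition, an inclusion composition, or a trivial rewriting using commutativity of multiplication by separated letters. In each case, by the Gr\"obner--Shirshov hypothesis the relevant composition reduces to $0$ modulo $R$ with leading words strictly smaller than $w$, so the rewriting strictly decreases $(w,N)$, completing the induction.

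For (ii) $\Rightarrow$ (iii), I would first observe that repeated ELW starting from any $w \in S^*$ terminates because $<$ is a well ordering, so every element of $A = k\langle S\rangle/Id(R)$ is a $k$-linear combination of $R$-reduced words, giving spanning. For linear independence, suppose $\sum \beta_j u_j \in Id(R)$ with $u_j \in Red(R)$ distinct and all $\beta_j \neq 0$; its leading word equals the largest $u_j$, which lies in $Red(R)$, contradicting (ii), which would force the leading word to contain some $\overline{s}$ as a factor.

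For (iii) $\Rightarrow$ (i), I would take any composition $(f,g)_w$ with $f,g \in R$ and note that it belongs to $Id(R)$ by construction, while its leading word is strictly below $w$. Applying ELW repeatedly yields $(f,g)_w = \sum \alpha_i a_i r_i b_i + h$ where each $a_i \overline{r_i} b_i < w$ and $h$ is a linear combination of $R$-reduced words. Since $(f,g)_w \in Id(R)$, so is $h$, and (iii) forces $h=0$; hence every composition is trivial relative to $R$.

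The main obstacle is the inductive step in (i) $\Rightarrow$ (ii): verifying that two overlapping or nested occurrences of leading words of $R$ in the same top-degree monomial $w$ can always be reconciled by a composition whose triviality is guaranteed by hypothesis. Handling the disjoint case requires only the substitution $s_i b_i' \overline{s_j} = \overline{s_i} b_i' s_j + (\text{lower})$ inside $w$, while the overlapping and nested cases are precisely what motivated the two forms of composition in the definition, so the bookkeeping is tight but conceptually parallel.
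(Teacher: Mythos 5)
Your outline is correct, but there is nothing in the paper to compare it against: the paper states the Composition--Diamond Lemma without proof, citing Shirshov, Bokut and Bergman, and only uses it as a tool. What you have written is essentially the standard Bokut/Bergman argument: the cyclic chain (i) $\Rightarrow$ (ii) $\Rightarrow$ (iii) $\Rightarrow$ (i), with the induction on the pair $(w,N)$ (maximal monomial $a_i\overline{s_i}b_i$ and its multiplicity) for (i) $\Rightarrow$ (ii), termination of ELW by the well ordering plus the leading-word contradiction for (ii) $\Rightarrow$ (iii), and reduction of a composition to an $R$-reduced remainder $h\in Id(R)$, forced to vanish by linear independence, for (iii) $\Rightarrow$ (i). The only place where your wording is looser than the actual proof is the disjoint case in (i) $\Rightarrow$ (ii): it is not ``commutativity'' but the identity
\[
a\,s_i\,b\,\overline{s_j}\,c-a\,\overline{s_i}\,b\,s_j\,c
=-a\,s_i\,b\,(s_j-\overline{s_j})\,c+a\,(s_i-\overline{s_i})\,b\,s_j\,c,
\]
whose right-hand side is a combination of terms $a_t r_t b_t$ with $a_t\overline{r_t}b_t<w$; your final paragraph indicates you know this, so the gap is only expository. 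With that step written out, and the observation that in the base case $N=1$ the coefficient of $w$ in $f$ cannot cancel (so $\overline f=w$), your proposal is a complete and correct proof of the lemma the paper merely quotes.
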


For commutative polynomials, this lemma is known as Buchberger's Theorem for the Gr\"{o}bner basis (see \cite{buchberger1,buchberger2}).

If $R \subset k \langle S \rangle$ is not a Gr\"{o}bner-Shirshov basis, then we reduce every nontrivial compositions to a polynomial relative to $R$ and add this polynomial to the $R$. Having repeated this procedure (possibly infinitely many times) we obtain a Gr\"{o}bner-Shirshov
basis $R^{comp}$. The process is called the Buchberger-Shirshov algorithm.

If the set $R$ consists of semigroup relations (i.e. $u - v$, where $u,v \in
S^*$), then each nontrivial composition of polynomials from $R$ has the same
semigroup form. Hence, $R^{comp}$ consists of semigroup relations too.

Let $A = smg\langle S|R \rangle$ be a semigroup presentation. Then $R \subset k \langle S \rangle$ and
we can obtain the Gr\"{o}bner-Shirshov bases $R^{comp}$. The set $R^{comp}$ does not
depend on the field $k$ and consists of semigroup relations. We will call $R^{comp}$
the Gr\"{o}bner-Shirshov bases for the semigroup $A$.

The main purpose of this paper is to find a Grobner-Shirshov basis and classify all reduced words for the affine Weyl group $\widetilde{A}_n$. The strategy for solving the problem is as follows: Let $R$ be the set of polynomials of the defining relations of $\widetilde{A}_n$. Using Buchberger-Shirshov algorithm we obtain new set $R'$ of polynomials including $R$. Then, by using the algorithm of elimination of leading words with respect to the polynomials in $R'$, all the words in the group $\widetilde{A}_n$ are reduced to the explicit classes of words. After that, we compute the number of the reduced words with respect to these classes by means of a generating function. This generating function turns out the be same  with the well known Poincar\'{e} polynomial of the affine Weyl group $\widetilde{A}_n$. Therefore, by the Composition-Diamond Lemma  the functions in $R'$ form Gr\"{o}bner-Shirshov basis for the affine Weyl group $\widetilde{A}_n$. Furthermore, one can easily see that this basis is in fact a reduced Gr\"{o}bner-Shirshov basis.

The results of this paper were obtained during M.Sc studies of  U\v{g}ur Ustao\v{g}lu at Abant \.{I}zzet Baysal University and are also contained in his thesis \cite{ugur}.

\section{Gr\"{o}bner-Shirshov Basis and Reduced Words}

\begin{defn} The affine Weyl group $\widetilde{A_n}$ has a presentation with generators $S=\{r_0,r_1, \ldots,r_n\}$ and defining relations

$ r_ir_i=1 \quad   0 \leq i \leq n$

$ r_ir_j=r_jr_i \quad   0\leq i < j-1 < n \; \mathrm{and} \; (i,j) \neq (0,n)$

$r_ir_{i+1}r_i=r_{i+1}r_ir_{i+1} \quad 0 \leq i \leq n-1$

$r_0r_nr_0=r_nr_0r_n$.

\end{defn}

Identifying each relation $u=v$ by a polynomial $u-v$, we define

$f_{1}^{(i)}=r_ir_i-1 \quad   0 \leq i \leq n$
\smallskip

$f_{2}^{(i,j)}=r_ir_j-r_jr_i \quad   0\leq i < j-1 < n \; \mathrm{and} \; (i,j) \neq (0,n)$
\smallskip

$f_{3}^{(i)}=r_ir_{i+1}r_i-r_{i+1}r_ir_{i+1} \quad 0 \leq i \leq n-1$
\smallskip

$f_4=r_0r_nr_0-r_nr_0r_n$

Let us define

$$
r_{ij}= \left\{
  \begin{array}{ll}
    r_ir_{i+1}\ldots r_j, & i<j; \\
    r_ir_{i-1}\ldots r_j, & i>j;\\
    r_i, & i=j; \\
    1, & i=1,j=0;\\
    1, & i=n,j=n+1.
  \end{array}
\right.
$$

\begin{lem} \label{gs}
Let $R=\{f_1,f_2,f_3,f_4\}$. A Grobner-Shirshov Basis of $\widetilde{A_n}$ with respect to deglex order with $r_0>r_1> \cdots >r_n$ contains the following polynomials.

$g_{1}^{(i,j)}=r_{ij}r_i-r_{i+1}r_{ij} \qquad 0 \leq i <j-1 < n $ with $(i,j)\neq (0,n) $
\smallskip

$g_{2}=r_{0n}r_0r_n-r_1r_{0n}r_0$
\smallskip

$g_{3}^{(j,k)}=r_0r_{nk}r_j-r_jr_0r_{nk} \qquad 2 \leq j<k-1<n $
\smallskip

$g_{4}^{(j)}=r_0r_{nj}r_{j+1}-r_jr_0r_{nj} \qquad 2 \leq j < n $
\smallskip

$g_{5}^{(k)}=r_0r_{nk}r_0-r_nr_0r_{nk} \qquad 2 \leq k <n $
\smallskip

$g_{6}^{(k,l)}=r_0r_{nk}r_{1l}r_{0l}-r_nr_0r_{nk}r_{1l}r_{0,l-1} \qquad 1\leq l <n, \;  2 \leq k\leq n$
\smallskip

$g_{7}^{(k,l)}=r_0r_{nk}r_{1l}r_0r_{nk}-r_1r_0r_{nk}r_{1l}r_0r_{n,k+1} \qquad 1 \leq l< k-1 <n $
\smallskip

$g_{8}^{(k,l)}=r_0r_{nk}r_{1l}r_0r_{n,k-1}-r_1r_0r_{nk}r_{1l}r_0r_{nk} \quad 3 \leq k\leq n, \quad k-1\leq l\leq n $
\smallskip

$g_{9}^{(j,k,l)}=r_0r_{nk}r_{1l}r_0r_{nj}r_{1l}-r_nr_0r_{nk}r_{1l}r_0r_{nj}r_{1,l-1}$ \\
\hspace*{30pt}$\qquad2\leq k\leq n-1, \quad k+1\leq j\leq n,\quad 1\leq l\leq j-2$
\smallskip

$g_{10}^{(j,k,l)}=r_0r_{nk}r_{1l}r_0r_{nj}r_{1,l+1}-r_nr_0r_{nk}r_{1l}r_0r_{nj}r_{1l}$ \\
\hspace*{30pt}$\qquad 2\leq k\leq n,\quad k\leq j\leq n,\quad j-1\leq l\leq n-1$
\smallskip

\end{lem}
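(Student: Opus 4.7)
The plan is to run the Buchberger--Shirshov algorithm starting from $R=\{f_1^{(i)},f_2^{(i,j)},f_3^{(i)},f_4\}$ and to verify that each listed $g_k$ either lies in $R$ itself or is produced by a nontrivial composition as the algorithm proceeds. I would first do a bookkeeping step: check that each $g_k$ is in the ideal $Id(R)$, i.e., is a valid identity in $\widetilde{A_n}$. For $g_1^{(i,j)}$ this is immediate --- commute the trailing $r_i$ leftward past $r_{i+2},\ldots,r_j$ using $f_2$, then apply the braid relation $f_3^{(i)}$ to the central $r_ir_{i+1}r_i$ block. The families $g_2,\ldots,g_{10}$ are handled analogously, each time alternating commutations ($f_2$), local braid moves ($f_3$), and the closing braid $f_4$; the $r_0$-decorated blocks $r_{nk}$ and $r_{1l}$ serve only as bookkeeping devices that mark where the elementary moves take place.

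Next, I would identify each $g_k$ as the output of a specific composition, thereby forcing it to appear in any completion of $R$. I expect an inductive construction organized by the length parameters. The family $g_1^{(i,j)}$ should arise by induction on $j-i$ from an intersection ambiguity involving $g_1^{(i,j-1)}$ (or $f_3^{(i)}$ when $j-i=2$) with an appropriate $f_2$; the polynomial $g_2$ from an intersection of $g_1^{(0,n-1)}$ with $f_4$; the polynomials $g_3,g_4,g_5$ from compositions of $f_4$ with $f_2$ or $f_3$; and $g_6,\ldots,g_{10}$ from iterated intersections of the previously constructed $g$'s, indexed precisely so that the leading words $\overline{g_k}$ are maximal under the deglex order $r_0>r_1>\cdots>r_n$.

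The main obstacle is the third and final step: verifying that every composition among $\{f_1,\ldots,f_4,g_1,\ldots,g_{10}\}$ reduces trivially, so that the set is in fact a Gr\"{o}bner--Shirshov basis and the list is complete. The number of ambiguity pairs grows quickly, and a brute-force case analysis would be unwieldy, so I would organize the verification by a double induction on the word-length of the ambiguity $w$ and on the length parameters attached to the $g_k$ involved, reducing each composition to a strictly simpler, already-verified one. As the cross-check emphasized in the introduction, I would then enumerate the resulting $R'$-reduced words explicitly, form their generating function, and compare it with the Poincar\'{e} polynomial of $\widetilde{A_n}$; by Composition--Diamond Lemma (iii) the matching cardinalities force the set to be a complete Gr\"{o}bner--Shirshov basis, confirming that no further polynomials beyond the stated $g_k$ are needed.
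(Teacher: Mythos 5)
Your overall strategy is the same as the paper's: the lemma is proved there exactly by running the Buchberger--Shirshov algorithm on $R=\{f_1,f_2,f_3,f_4\}$ and exhibiting, for each $g_k$, an explicit ambiguity whose composition (after reduction relative to the elements already obtained) yields it, organized inductively in the indices just as you predict for $g_1$ and $g_2$. The genuine gap is that your write-up stays at the level of a plan: the entire content of the proof \emph{is} the list of explicit compositions, and you never compute any of them, nor do you verify that the stated first terms really are the leading words under deglex with $r_0>r_1>\cdots>r_n$. Where you do commit to specifics, some guesses are wrong: $g_3^{(j,n)}$ comes from the intersection of two commutativity relations $(f_2^{(0,j)},f_2^{(j,n)})$ and $g_4^{(n-1)}$ from $(f_2^{(0,n-1)},f_3^{(n-1)})$, not from compositions with $f_4$; only $g_2$, $g_5$, $g_6$ and $g_7$ involve $f_4$ at the top of their inductions. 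More importantly, the phrase ``each $g_k$ \ldots is produced by a nontrivial composition'' glosses over the fact that for $g_7^{(k,l)}$ with $k<n$, for $g_8^{(k,l)}$, and for $g_{10}^{(k,j,l)}$ the relevant composition is \emph{not} equal to the new polynomial: it equals it only modulo multiples of earlier elements ($g_1$, $g_3$, $g_4$, $f_2$), e.g.\ $(g_7^{(k+1,l)},g_3^{(k+1,k)}) = g_7^{(k,l)} - r_1 r_0 r_{n,k+1} r_{1l}\, g_3^{(k,k+2)} r_{k+1}$, so these reductions must be exhibited explicitly. Without those computations the lemma is asserted rather than proved.

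A smaller point of scope: your ``third and final step'' (checking that all compositions among the $f$'s and $g$'s reduce trivially, or falling back on the counting argument) is not part of this lemma. The statement only claims that a Gr\"{o}bner--Shirshov basis \emph{contains} the listed polynomials, and the paper's proof explicitly declines to claim completeness at this stage; completeness is established later by comparing the generating function of the reduced words with the Poincar\'{e} polynomial of $\widetilde{A_n}$ and invoking the Composition--Diamond Lemma, exactly as in your fallback. So what you call the main obstacle belongs to the main theorem, not to the proof of this lemma.
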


\begin{proof}
Let $R=\{f_1,f_2,f_3,f_4\}$. We apply the Buchberger-Shirshov algorithm to the $R$. We show every ELW in the below computations except ELW's of $f_2{(i,j)}$, the cummutativity relations. Notice that at this point we are not claiming that this is a Gr\"{o}bner-Shirshov basis for $\widetilde{A}_n$.

\smallskip

\noindent $(f_{3}^{(i)},f_{2}^{(i,i+2)})=g_{1}^{(i,i+2)}.$

\smallskip

\noindent $(g_{1}^{(i,j-1)},f_{2}^{(i,j)})=g_{1}^{(i,j)} \quad \mathrm{for} \quad  j=i+3,\ldots,n.$

\smallskip

\noindent $(g_{1}^{(0,n-1)},f_{4})=g_{2}.$

\smallskip

\noindent $(f_{2}^{(0,j)},f_{2}^{(j,n)})=g_{3}^{(j,n)}.$

\smallskip

\noindent $(g_{3}^{(j,k+1)},f_{2}^{(j,k)})=g_{3}^{(j,k)}\quad \mathrm{for} \quad k=n-1,\ldots,j+2.$

\smallskip

\noindent $(f_{2}^{(0,n-1)},f_{3}^{(n-1)})=g_{4}^{(n-1)}.$

\smallskip

\noindent $(g_{3}^{(j,j+2)},f_{3}^{(j)})=g_{4}^{(j)} \quad  \mathrm{for} \quad j=n-2,...,2.$

\smallskip

\noindent $(f_{4},f_{2}^{(0,n-1)})=g_{5}^{(n-1)}.$

\smallskip

\noindent $(g_{5}^{(k+1)},f_{2}^{(0,k)})=g_{5}^{(k)} \quad  \mathrm{for} \quad k=n-2,\ldots,2 .$

\smallskip

\noindent $(f_{4},f_{3}^{(0)})=g_{6}^{(n,1)}$

\smallskip

\noindent $(g_{5}^{(k)},f_{3}^{(0)})=g_{6}^{(k,1)}\quad \mathrm{for} \quad k=n-1,\ldots,2.$

\smallskip

\noindent $(g_{6}^{(k,l-1)},f_{3}^{(l-1)})=g_{6}^{(k,l)}\quad \mathrm{for} \quad l=2,\ldots,n-1 .$

\smallskip

\noindent $(f_{3}^{(0)},f_4)=r_0r_nr_1r_0r_n-r_1r_0r_nr_1r_0=g_{7}^{(n,1)}.$

\smallskip

\noindent $(g_{1}^{(0,l)},f_{4})=g_{7}^{(n,l)} \quad \mathrm{for} \quad l=2,\ldots,k-2.$

\smallskip

\begin{flalign*}
(g_{7}^{(k+1,l)},g_{3}^{(k+1,k)})& = r_{0}r_{n,k+1}r_{1l}r_{k}r_{0}r_{nk}-r_{1}r_{0}r_{n,k+1}r_{1l}r_{0}r_{n,k+2}r_{k}r_{k+1}  \\
   &= g_{7}^{(k,l)}-r_{1}r_{0}r_{n,k+1}r_{1l}g_{3}^{(k,k+2)}r_{k+1} \quad \mathrm{for} \quad k=n-2,\ldots,3.&
\end{flalign*}

\smallskip

\begin{flalign*}
(g_{7}^{(n,l)},g_{3}^{(n,n-1)})& = r_{0}r_{n}r_{1l}r_{n}r_{0}r_{n,n-1}-r_{1}r_{0}r_{n}r_{1l}r_{0}r_{n,n-1}r_{n}  \\
   &= g_{7}^{(n-1,l)}-r_{1}r_{0}r_{n}r_{1l}f_{2}^{(0,n-1)}r_{n} \quad \mathrm{for} \quad k=n-1.&
\end{flalign*}

\smallskip

\begin{flalign*}
(g_{7}^{(k,k-2)},g_{4}^{(k-1)})&=r_{0}r_{nk}r_{1,k-1}r_{0}r_{n,k-1}-r_{1}r_{0}r_{nk}r_{1,k-2}r_{0}r_{n,k+1}r_{k-1}r_{k}\\
   &=g_{8}^{(k,k-1)}-r_{1}r_{0}r_{nk}r_{1,k-2}g_{3}^{(k-1,k+1)}r_{n}\quad \mathrm{for} \quad k=n-1,\ldots,3.&
\end{flalign*}

\smallskip

\begin{flalign*}
(g_{7}^{(n,n-2)},g_{4}^{(n-1)})&=r_{0}r_{n}r_{1,n-1}r_{0}r_{n,n-1}-r_{1}r_{0}r_{n}r_{1,n-2}r_{0}r_{n-1}r_{n}\\
   &=g_{8}^{(n,n-1)}-r_{1}r_{0}r_{n}r_{1,n-2}f_{2}^{(0,n-1)}r_{n}\quad \mathrm{for} \quad k=n.&
\end{flalign*}

\smallskip

\noindent $(g_{2},g_{4}^{(n-1)})=r_{0,n-2}f_{3}^{(n-1)}r_{0}r_{n}r_{n-1}-r_{1}r_{0n}f_{2}^{(0,n-1)}r_{n}-r_{1}r_{0,n-2}f_{3}^{(n-1)}r_{0}r_{n}+g_8^{(n,n)}$

\smallskip

\begin{flalign*}
(g_{8}^{(k+1,l)},g_{4}^{(k-1)})&=r_{0}r_{n,k+1}r_{1,k-2}g_{1}^{(k-1,l)}r_{0}r_{n,k-1}-r_{1}r_{0}r_{n,k+1}r_{1l}g_{3}^{(k-1,k+1)}r_{k} \\
&-r_{1}r_{0}r_{n,k+1}r_{1,k-2}g_{1}^{(k-1,l)}r_{0}r_{n,k+1}r_{k}+g_{8}^{(k,l)}\quad \mathrm{for} \quad l=k,\ldots,n .&
\end{flalign*}

\smallskip

\noindent $(g_{6}^{(k,1)},f_{2}^{(1,n)})=g_{9}^{(k,n,1)}.$

\smallskip

\noindent $(g_{9}^{(k,j+1,1)},f_{2}^{(1,j)})=g_{9}^{(k,j,1)}\quad \mathrm{for} \quad j=n-1,\ldots,k+1.$

\smallskip

\begin{flalign*}(g_{9}^{(k,j,l-1)},f_{3}^{(l-1)})&=r_0r_{nk}r_{1,l-1}r_0r_{nj}r_{1,l-2}r_{l}r_{l-1}r_l-r_nr_0r_{nk}r_{1,l-1}r_0r_{nj}r_{1,l-2}r_{l}r_{l-1}\\
&=g_{9}^{(k,j,l)} \quad \mathrm{for} \quad l=2,\ldots,j-2.&
\end{flalign*}

\smallskip

\begin{flalign*}(g_{9}^{(k,j+1,j-1)},f_{3}^{(j-1)})=&r_0r_{nk}r_{1,j-1}r_0r_{n,j+1}r_{1,j-2}r_{j}r_{j-1}r_{j}\\
&\qquad \quad -r_{n}r_{0}r_{nk}r_{1,j-1}r_{0}r_{n,j+1}r_{1,j-2}r_{j}r_{j-1}\\
&=g_{10}^{(k,j,j-1)}\quad \mathrm{for} \quad j=n-1,\ldots,k.& \\
\end{flalign*}

\smallskip

\begin{flalign*}
(g_{6}^{(k,n-1)},f_{3}^{(n-1)})=& r_0r_{nk}r_{1,n-1}r_{0,n-2}r_{n}r_{n-1}r_{n}\\
&\qquad -r_{n}r_0r_{nk}r_{1,n-1}r_{0}r_{n}r_{1,n-1} = g_{10}^{(k,n,n-1)} .&
\end{flalign*}

\smallskip

\begin{flalign*}(g_{10}^{(k,j,l-1)},f_{3}^{(l)})=&r_0r_{nk}r_{1,l-1}r_0r_{nj}r_{1,l-1}r_{l+1}r_{l}r_{l+1}\\
& \qquad -r_nr_0r_{nk}r_{1,l-1}r_0r_{nj}r_{1,l-1}r_{l}r_{l+1}r_l\\
=&r_{0}r_{nk}r_{1,l-1}g_{4}^{(l)}r_{l-1,j}r_{1,l+1}-r_{n}r_{0}r_{nk}r_{1,l-1}g_{4}^{(l)}r_{l-1,j}r_{1l}\\
&\qquad +g_{10}^{(k,j,l)} \quad \mathrm{for} \quad l=j-1,\ldots,n-1.&
\end{flalign*}

In the above equation if $l=j$, then $r_{l-1,j}$ assumed to be the identity $1$.

\end{proof}

Let $R'= R \cup \{g_1, \ldots, g_{10} \}.$ We want to find the properties of the elements of the set $Red(R') = \{w \in S^*|w \neq a\overline{f}b, a, b \in S^*, f \in R'\}$. If $w \in Red(R')$, then we call it a reduced word.

Notice that elements of  $R'$ not containing $r_0$ is in fact a Gr\"{o}bner-Shirshov basis for the Coxeter group $A_n$. The following lemma is just another way of expressing of the Lemma 3.2 of \cite{bokut1}.
\begin{lem}\label{ri}
Any reduced word not containing $r_{0}$ is in the form $$r=(r_{nj_{n}})^{\alpha_{n}}(r_{n-1,j_{n-1}})^{\alpha_{n-1}},\ldots,(r_{2j_{2}})^{\alpha_{2}},(r_{1j_{1}})^{\alpha_{1}}$$ where $i\leq j_{i}\leq n$ and $\alpha_{i}\in \{0,1\}$.
\end{lem}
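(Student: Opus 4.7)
My plan is to reduce this lemma to the already-established Gr\"{o}bner-Shirshov basis of the finite Coxeter group $A_n$ from \cite{bokut1}, as the statement itself flags. The key observation is that membership of a word $w$ in $\mathrm{Red}(R')$ is decided against the leading monomials of the elements of $R'$, so if $w$ contains no $r_0$, then only those elements of $R'$ whose leading monomials are themselves $r_0$-free can obstruct $w$.

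\textbf{Step 1 (Isolate the $r_0$-free obstructions).} First I would scan the list $R'=R\cup\{g_1,\ldots,g_{10}\}$ and check that, under the order $r_0>r_1>\cdots>r_n$, the leading monomial of each $g_m$ with $m\ge 2$, as well as that of $f_4$, begins with $r_0$. Consequently the only members of $R'$ which can forbid factors of an $r_0$-free word are the polynomials $f_1^{(i)}$ for $1\le i\le n$, the commutations $f_2^{(i,j)}$ for $1\le i<j-1\le n-1$, the braid relations $f_3^{(i)}$ for $1\le i\le n-1$, and the $g_1^{(i,j)}$ for $1\le i<j-1\le n-1$.

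\textbf{Step 2 (Recognise the $A_n$ basis and apply Bokut's normal form).} This sublist is precisely the Gr\"{o}bner-Shirshov basis for the finite Coxeter group $A_n$ on generators $\{r_1,\ldots,r_n\}$ under the deg-lex order induced by $r_1>\cdots>r_n$, as established in Lemma 3.2 of \cite{bokut1}. Indeed the generators $\{r_1,\ldots,r_n\}$ of $\widetilde{A_n}$ obey exactly the Coxeter presentation of $A_n$, so no discrepancy arises. By that lemma, the reduced words against this basis are precisely those of the stated form $(r_{nj_n})^{\alpha_n}(r_{n-1,j_{n-1}})^{\alpha_{n-1}}\cdots(r_{1j_1})^{\alpha_1}$ with $i\le j_i\le n$ and $\alpha_i\in\{0,1\}$. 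Since any $r_0$-free word in $\mathrm{Red}(R')$ is, in particular, reduced against the sublist identified in Step~1, it takes the required form.

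\textbf{Anticipated obstacle.} There is no substantive mathematical hurdle; the lemma is essentially a translation of an existing result, which is also acknowledged in the statement. The only delicate points are (i) the leading-monomial computation that makes every $g_m$ for $m\ge 2$ start with $r_0$, which is immediate from the chosen order, and (ii) verifying that the normal-form parametrisation used here, with ranges $i\le j_i\le n$ and the convention $r_{ij}=r_ir_{i+1}\cdots r_j$, matches the one in \cite{bokut1} up to trivial relabelling. Both are routine bookkeeping.
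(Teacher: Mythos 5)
Your proposal is correct and follows essentially the same route as the paper, which simply observes that the elements of $R'$ not containing $r_0$ (equivalently, those with $r_0$-free leading words under the order $r_0>r_1>\cdots>r_n$) form the Gr\"{o}bner-Shirshov basis of the Coxeter group $A_n$ and then invokes Lemma~3.2 of \cite{bokut1} for the normal form. Your Step~1, making explicit that only $r_0$-free leading words can obstruct an $r_0$-free word, is exactly the (implicit) content of the paper's remark preceding the lemma, so there is no substantive difference.
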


After investigation of leading words of the elements of $G'$, we can claim the following results. For convenience  we write $r_{0}r_{n,n+1}r_{1l}$ instead of $r_{0l}$ and  $r_{0}r_{nk}r_{10}$ instead of $r_{0}r_{nk}$.

\begin{lem}\label {r0}
The following words are reduced.

  (i) $w=r_{0}r_{nk}r_{1l}$ $2\leq k\leq n+1$, $0\leq l\leq n$

  (ii) $$(r_0r_{nk}r_{1l})(r_0r_{np}r_{1q})=\left\{
                                            \begin{array}{ll}
                                              (k<p) \wedge (l>q), & \hbox{if} \; \; \; q-p <l-k<-1\\
                                              (k \leq p) \wedge (l>q), & \hbox{if} \; \; \; (q-p<-1) \wedge (l-k \geq -1)\\
                                              (k \leq p) \wedge (l \geq q), & \hbox{if} \; \; \;  l-k>q-p \geq -1
                                            \end{array}
                                          \right.$$

\end{lem}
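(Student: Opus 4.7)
The plan is to prove both parts by direct inspection of the leading words listed in Lemma~\ref{gs}: verify that under the stated conditions no leading word of any polynomial in $R'$ occurs as a contiguous subword of the given expression.

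For part~(i), observe that $w=r_0r_{nk}r_{1l}$ has the letter sequence $r_0,r_n,r_{n-1},\ldots,r_k,r_1,r_2,\ldots,r_l$, which contains exactly one $r_0$. This immediately rules out every leading word carrying two $r_0$'s, namely $f_4$, $g_2$, and $g_5,g_6,g_7,g_8,g_9,g_{10}$. The remaining candidates $f_1,f_2,f_3,g_1,g_3,g_4$ are each eliminated by a one-line structural observation: no letter is immediately repeated (excluding $f_1$); the only adjacent pair $r_ir_j$ with $i<j$ is $r_0r_n$, which is explicitly excluded from $f_2$; no triple $r_ir_{i+1}r_i$ occurs because $w$ is strictly decreasing on $r_n,\ldots,r_k$ and strictly increasing on $r_1,\ldots,r_l$ (excluding $f_3$); the ``return'' pattern $r_{ij}r_i$ required by $g_1$ cannot appear in a unimodal sequence; and the patterns $g_3,g_4$ must begin at the lone $r_0$, after which the only possible continuation forces the letter following $r_k$ to be $r_1$, incompatible with $r_j$ for $j\ge 2$ or $r_{j+1}$ for $j\ge 2$.

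For part~(ii), the product $W=r_0r_{nk}r_{1l}r_0r_{np}r_{1q}$ has exactly two $r_0$'s. Subwords of $W$ that either omit $r_0$ or begin at the second $r_0$ lie inside one of the blocks $r_{nk}r_{1l}$ or $r_0r_{np}r_{1q}$, so they are handled by (the argument of) part~(i). The essential case is a subword starting at the \emph{first} $r_0$ and crossing the second; the only leading words of this shape are $f_4$, $g_2$, $g_5, g_6, g_7, g_8, g_9, g_{10}$. Each of these has the form $r_0r_{nk'}r_{1l'}r_0(\cdots)$, so left-to-right matching against $W$ forces $k'=k$ and $l'=l$, after which the tail $(\cdots)$ must match a prefix of $r_{np}r_{1q}$. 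This translates each template into a concrete inequality relating the ``descent'' $k-p$ to the ``ascent'' $l-q$.

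The main obstacle is the bookkeeping in this last step: for each of the templates $f_4,g_2,g_5,\ldots,g_{10}$ one determines the precise range of $(k,l,p,q)$ for which it embeds into $W$ and then takes the complement. The three displayed cases of the lemma correspond exactly to three regimes for the pair $(l-k,\,q-p)$; in each regime a different subset of the templates is dangerous, and the inequalities $(k<p)\wedge(l>q)$, $(k\le p)\wedge(l>q)$, $(k\le p)\wedge(l\ge q)$ are what remains after excluding all active templates in that regime. The delicate points are the boundary conventions $k=n+1$ and $l=0$ (where $r_{n,n+1}$ or $r_{1,0}$ is the identity and part of the first block collapses), which determine whether weak or strict inequalities appear in each clause.
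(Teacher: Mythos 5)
Your overall strategy --- verifying by direct inspection that no leading word from Lemma~\ref{gs} occurs as a contiguous subword --- is exactly the approach the paper intends; in fact the paper states Lemma~\ref{r0} with no written proof at all (only the remark that it follows from ``investigation of leading words''), so your part~(i) is already more detailed than the original and is essentially correct, up to two small imprecisions: for $f_2$ the dangerous pattern is $r_ir_j$ with $i<j-1$ (not merely $i<j$), and for $g_3,g_4$ you should also dismiss alignments in which the template's descending run stops at some $r_{k'}$ with $k'>k$, where the next letter of $w$ is $r_{k'-1}$ rather than $r_1$.

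Part~(ii), however, has genuine gaps as written. First, your case split misses subwords that begin strictly after the first $r_0$ (inside $r_{nk}r_{1l}$) and contain the second $r_0$; to dismiss these you need the observation, used by the paper in the proof of Lemma~\ref{rir0}, that every leading word of $R'$ containing $r_0$ begins with $r_0$. Second, your list of leading words with two $r_0$'s omits $\overline{f_1^{(0)}}=r_0r_0$, $\overline{f_3^{(0)}}=r_0r_1r_0$ and $\overline{g_1^{(0,j)}}=r_{0j}r_0$; under the convention $k=n+1$ (first factor $r_{0l}$) these are precisely the templates that can embed, i.e.\ they live exactly at the boundary you flag as ``delicate,'' so they cannot be left off the list. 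Third, and most importantly, the heart of the statement --- determining for each template $r_0r_{nk}r_{1l}r_0(\cdots)$ exactly when its tail is a prefix of $r_{np}r_{1q}$, and then checking that under each of the three displayed hypotheses none of the (complete list of) templates embeds --- is not carried out; you assert that ``the inequalities are what remains after excluding all active templates,'' but that is precisely what has to be proved, and with the incomplete template list the complement would be strictly larger than the stated conditions in the $k=n+1$ regimes. The computation is routine once set up (for instance $\overline{g_7^{(k,l)}}$ embeds exactly when $l\le k-2$ and $p\le k$, $\overline{g_8^{(k,l)}}$ exactly when $l\ge k-1$ and $p\le k-1$, $f_4$ and $g_5$ exactly when $l=0$, while $g_6,g_9,g_{10}$ embed when $q\ge l$ or $q\ge l+1$ and are the source of the distinction between $l>q$ and $l\ge q$), but until this bookkeeping is actually written down the specific inequalities of the lemma remain unverified.
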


\begin{lem} \label {rir0}
Let $w$ be a reduced word starting with $r_i$ for $i=1,\ldots,n$ and let $t$ be a reduced word starting with $r_0$. Then $wt$ is also a reduced word.
\end{lem}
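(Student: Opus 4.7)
The plan is to argue by contradiction. Suppose $wt$ is not reduced; then some leading monomial $\bar f$ of an element of $R'$ occurs as a subword of $wt$, and since $w$ and $t$ are individually reduced this occurrence must cross the boundary between them. I would factor $\bar f = uv$ with $u$ a nonempty suffix of $w$ and $v$ a nonempty prefix of $t$, and derive a contradiction from the structure of the leading monomials listed in Lemma \ref{gs}.

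The whole argument rests on a single structural observation that I would establish first: \emph{every leading monomial of an element of $R'$ which involves the letter $r_0$ in fact begins with $r_0$}. I would verify this by directly scanning the list in Lemma \ref{gs} together with the defining relations in $R$. Among the $f_i$-leading terms, the only ones containing $r_0$ are $r_0r_0$, $r_0r_j$, $r_0r_1r_0$, and $r_0r_nr_0$, each beginning with $r_0$; the family $\bar g_1^{(i,j)}=r_ir_{i+1}\cdots r_jr_i$ involves $r_0$ only when $i=0$, in which case it reads $r_0r_1\cdots r_jr_0$; and each of $\bar g_2, \bar g_3^{(j,k)}, \bar g_4^{(j)}, \bar g_5^{(k)}, \bar g_6^{(k,l)}, \ldots, \bar g_{10}^{(j,k,l)}$ is displayed in Lemma \ref{gs} as starting with $r_0$.

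Once the structural observation is in hand, the conclusion is immediate. Because $t$ begins with $r_0$, the prefix $v$ of $\bar f$ begins with $r_0$, so $\bar f$ contains the letter $r_0$; the observation then forces $\bar f$ itself to begin with $r_0$; but the leftmost letter of $\bar f$ is the leftmost letter of $u$, so $u$ starts with $r_0$. Since $u$ is a suffix of $w$, this means $w$ contains an occurrence of $r_0$. This contradicts the hypothesis that $w$ is a reduced word in the form of Lemma \ref{ri} (whose letters all lie in $\{r_1,\ldots,r_n\}$), so no straddling $\bar f$ can exist and $wt$ must be reduced.

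I expect the only real obstacle to be administrative: the structural observation requires a methodical sweep through all fifteen families of polynomials in $R'$. There is nothing conceptually delicate --- in each case either the family never produces a leading term containing $r_0$, or the explicit formula in Lemma \ref{gs} visibly starts with $r_0$ --- but the length of the list makes it easy to overlook a case, so one must be systematic in the verification.
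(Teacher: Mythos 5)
Your proposal is correct and takes essentially the same route as the paper: your structural observation that every leading word of $R'$ containing $r_0$ must begin with $r_0$ is exactly the contrapositive of the paper's stated observation that no leading word beginning with $r_i$, $i=1,\ldots,n$, contains $r_0$, and the boundary-straddling argument you write out is precisely what the paper leaves implicit. The only caveat is one you share with the paper itself: the final contradiction really uses that $w$ contains no $r_0$ at all (as in Lemma~\ref{ri}), which is the intended reading of the hypothesis ``starting with $r_i$.''
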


\begin{proof}
 The results follows from the following observation. Any leading word starting with $r_i$ for $i=1,\ldots,n$ in $G'$ do not contains $r_0$.
\end{proof}

\begin{lem} \label{lem8}
Let $w_{1},w_{2},\ldots w_{k},w_{k+1}$ be reduced words in the one of the forms given in the first three items of Lemma ~\ref{r0}. If $w_{1}w_{2}\ldots w_{k}$ and $w_{2}w_{3}\ldots w_{k+1}$ are reduced, then $w_{1}w_{2}\ldots w_{k+1}$ is also reduced.
\end{lem}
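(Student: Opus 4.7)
The plan is to argue by contradiction. Suppose $w_{1}w_{2}\cdots w_{k+1}$ fails to be reduced, so it contains as a contiguous subword some leading word $\bar f$ of a polynomial $f\in R'$. I will show that $\bar f$ must already sit inside $w_{1}\cdots w_{k}$ or inside $w_{2}\cdots w_{k+1}$, either of which contradicts a hypothesis. (I implicitly assume $k\ge 2$; for $k=1$ the statement degenerates, and reducedness of $w_{1}w_{2}$ is precisely the content of Lemma~\ref{r0}(ii).)

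The argument rests on three structural facts, all read off the explicit leading-word lists in Lemma~\ref{gs} and the form of the $w_{i}$'s from Lemma~\ref{r0}: (a) each $w_{i}=r_{0}r_{nk_{i}}r_{1l_{i}}$ contains the letter $r_{0}$ exactly once, as its very first letter; (b) every leading word of a polynomial in $R'$ contains at most two occurrences of $r_{0}$; (c) every leading word of $R'$ that contains any $r_{0}$ begins with $r_{0}$. Granting these, I split into cases based on the number $m$ of $r_{0}$'s appearing in $\bar f$. If $m=0$, then $\bar f$ lies inside a maximal $r_{0}$-free block of the concatenation, and by (a) each such block is exactly the tail $r_{nk_{a}}r_{1l_{a}}$ of a single $w_{a}$; thus $\bar f$ would be a forbidden subword of the reduced word $w_{a}$, a contradiction. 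If $m=1$, (c) forces $\bar f$ to begin with $r_{0}$, hence to start at the beginning of some $w_{a}$; since the lone $r_{0}$ of $\bar f$ is $w_{a}$'s, $\bar f$ cannot reach $w_{a+1}$'s initial $r_{0}$, so $\bar f\subseteq w_{a}$, again contradicting reducedness of $w_{a}$. If $m=2$, the same reasoning places the two $r_{0}$'s of $\bar f$ at the starts of $w_{a}$ and $w_{a+1}$, so $\bar f\subseteq w_{a}w_{a+1}$; since $1\le a\le k$ and $k\ge 2$, at least one of $a\le k-1$ or $a\ge 2$ must hold, placing $\bar f$ inside $w_{1}\cdots w_{k}$ or inside $w_{2}\cdots w_{k+1}$, respectively. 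Finally, $m\ge 3$ is ruled out by (b).

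The main obstacle is really just the combinatorial verification of (b) and (c): one has to walk through each family $g_{1},\ldots,g_{10}$ along with the original $f_{i}$'s and confirm that no leading word contains three or more $r_{0}$'s and that every leading word containing any $r_{0}$ puts one at its very front. This is mechanical but requires some care because of the many small index constraints on the $g_{i}$'s; once (b) and (c) are in hand, the rest of the proof reduces to the $r_{0}$-counting at $w_{i}$-boundaries described above.
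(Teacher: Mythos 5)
Your argument is correct, but it takes a genuinely different route from the paper's. The paper observes that any forbidden subword of $w_{1}w_{2}\cdots w_{k+1}$ not already contained in $w_{1}\cdots w_{k}$ or in $w_{2}\cdots w_{k+1}$ must have the form $r\,w_{2}\cdots w_{k}\,s$ with $r$ a suffix of $w_{1}$ and $s$ a prefix of $w_{k+1}$, and then shows this word is itself reduced by combining reducedness of $w_{2}\cdots w_{k}s$ (a subword of $w_{2}\cdots w_{k+1}$) with Lemma~\ref{rir0}, using that $r$ contains no $r_{0}$. You instead count occurrences of $r_{0}$: since every leading word of $R'$ that contains $r_{0}$ begins with $r_{0}$ and contains at most two of them (your facts (b) and (c), which do check out against the lists in Lemma~\ref{gs}), while each $w_{i}$ contributes exactly one $r_{0}$, at its front, any leading word occurring in the concatenation is confined to at most two consecutive blocks $w_{a}w_{a+1}$, hence lies inside $w_{1}\cdots w_{k}$ or inside $w_{2}\cdots w_{k+1}$ once $k\ge 2$. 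Note that your fact (c) is precisely the observation on which the paper's Lemma~\ref{rir0} rests, so the two proofs share their combinatorial core; what your additional fact (b) buys is a self-contained treatment of the case in which a candidate forbidden subword would swallow all of $w_{1}$ including its initial $r_{0}$ --- a case the paper's proof passes over when it asserts that $r$ contains no $r_{0}$ --- at the cost of the mechanical verification of (b) over all families of $R'$. Your caveat about $k\ge 2$ is also right: for $k=1$ the statement degenerates (reducedness of $w_{1}$ and $w_{2}$ alone does not give reducedness of $w_{1}w_{2}$; the conditions of Lemma~\ref{r0} are needed), and the paper's proof tacitly assumes $k\ge 2$ as well.
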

\begin{proof}
Let $w=w_{1}w_{2}\ldots w_{k+1}$. The only possible reduced subword of $w$ is in the form $rw_{2}w_{3}\ldots w_{k}s$ where $r$ and $s$ are subwords of $w_{1}$ and $w_{k+1}$, respectively. Since $w_{2}w_{3}\ldots w_{k}w_{k+1}$ is reduced word, $w_{2}w_{3}\ldots w_{k}s$ is also reduced word starting with $r_{0}$. Since $r$ is a subword of $w_{1}$, $r$ is also a reduced word not containing $r_{0}$. By Lemma ~\ref{rir0}, $rw_{2}w_{3}\ldots w_{k}s$ is also reduced. Hence $w_{1}w_{2}\ldots w_{k+1}$ must be a reduced word.
\end{proof}

\begin{defn}\label{u}
Let $a_{i}=r_{0}r_{nk}r_{1l}$ for $2\leq k\leq n+1$, $0\leq l\leq n$ and $l-k=i-2$.

Let $$u=(a_{n})^{m_{n}}(a_{n-1})^{m_{n-1}}\ldots (a_{1})^{m_{1}}$$ where $m_{i}\geq 0$ for $i=1,\ldots,n$. Furthermore, if $a_{i}=r_{0}r_{nk}r_{1l}$, then $a_{i+1}=r_{0}r_{n,k+1}r_{1l}$ or $a_{i-1}=r_{0}r_{nk}r_{1,l-1}$. Notice that the number of possible $u$'s is $2^{n-1}$.
We call $a_i$'s the components of the word $u$.
\end{defn}

\begin{defn}
Let $b_t=r_{0}r_{np}r_{1q}$ for $2\leq p\leq n+1$ and $0\leq q\leq n$ satisfying  $q-p<-1$. There are $n!$ such words.

Let $$v^{(b_t)}=b_t (b_{t-1})^{\alpha_{t-1}}\cdots (b_{s})^{\alpha_s}$$ where $\alpha_{i}\in \{0,1\}$. Furthermore, if $b_{i}=r_{0}r_{np_i}r_{1q_i}$, then $b_{i-1}=r_{0}r_{np_{i-1}}r_{1q_{i-1}}$ for $p_i<p_{i-1}$ and $q_i>q_{i-1}$. If $p_i=n+1$ or $q_i=0$, then $\alpha_{j}=0$ for $j=s,\ldots,i-1$.

For the convenience, we define $1=r_0r_{n,\infty}r_{1,-1}$ and $v{(1)}=1$.
\end{defn}

\begin{prop}{\label w}
Let $u$ and $v^{(b_t)}$ be words defined above where $a_1=r_0r_{nk}r_{1l}$ and $b_t=r_0r_{np}r_{1q}$. Then the words $u$, $v^{(b_t)}$ and $w=u v^{(b_t)}$ are reduced if $p \geq k$ and $q<l$ in $w$.
\end{prop}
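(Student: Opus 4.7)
The plan is to establish reducedness of $u$, $v^{(b_t)}$, and $w$ in three analogous steps, each driven by Lemma \ref{lem8}: iterated extension of a reduced word by one component at a time, with each individual component already reduced by Lemma \ref{r0}(i). The whole problem then collapses to verifying, at each junction of the product, that the two adjacent components form a reduced word, a question directly addressed by Lemma \ref{r0}(ii).

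First I will handle $u=(a_n)^{m_n}\cdots(a_1)^{m_1}$. The junctions are of two types: a transition $a_i a_{i-1}$ and a repetition $a_i a_i$. The adjacency restriction in Definition \ref{u} forces the transition to be either $(K,L)\to(K+1,L)$ or $(K,L)\to(K,L-1)$; writing $L_1-K_1=i-2$ and $L_2-K_2=i-3$, both options satisfy $L_1-K_1>L_2-K_2\geq -1$ (using $i\geq 2$) together with $K_1\leq K_2$ and $L_1\geq L_2$, which is Case 3 of Lemma \ref{r0}(ii). For the repeated pair $a_i a_i$, which is not codified in Lemma \ref{r0}(ii), I plan a direct inspection against the leading-word list of Lemma \ref{gs}: with $l-k=i-2\geq -1$, the candidate $g_7^{(k,l)}$ is excluded by its constraint $l<k-1$, the candidate $g_8^{(k,l)}$ mismatches because its second factor starts with $r_0 r_{n,k-1}$ rather than $r_0 r_{nk}$, and the remaining patterns $g_{10}^{(k,k,l)}$ require the second factor's $L$-index to strictly exceed the first's. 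A parallel argument, now in Case 1 of Lemma \ref{r0}(ii), handles $v^{(b_t)}$: each consecutive pair $b_i b_j$ has both $q-p<-1$ with $p_i<p_j$ and $q_i>q_j$ by the monotonicity built into the definition (and no repeated components can occur because $\alpha_i\in\{0,1\}$). Iterating Lemma \ref{lem8} then yields reducedness of $u$ and of $v^{(b_t)}$.

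Finally, for $w=uv^{(b_t)}$, only the interface pair $a_1 b_t$ remains to check. Here $a_1$ has $l-k=-1$ and $b_t$ has $q-p<-1$, so the hypotheses $p\geq k$ and $q<l$ translate exactly into $K_1\leq K_2$ and $L_1>L_2$; this is precisely Case 2 of Lemma \ref{r0}(ii), and a last application of Lemma \ref{lem8} spanning $u$ and $v^{(b_t)}$ shows $w\in\mathrm{Red}(R')$. I expect the hardest step to be the $a_i a_i$ verification, since it alone falls outside the three cases in Lemma \ref{r0}(ii) and demands an exhaustive enumeration against every family in the Gröbner-Shirshov basis of Lemma \ref{gs}; once that hurdle is cleared, the remaining arguments are routine translations of the index inequalities into the three prepackaged cases of Lemma \ref{r0}(ii).
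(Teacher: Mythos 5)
Your proposal is essentially the paper's own proof: the paper disposes of this proposition in one line by invoking Lemma \ref{r0} and Lemma \ref{lem8}, i.e.\ exactly your scheme of checking each junction pair and gluing by iterated application of Lemma \ref{lem8}, and your case assignments (Case 3 inside $u$, Case 1 inside $v^{(b_t)}$, Case 2 at the interface, using the hypotheses $p\geq k$, $q<l$) are correct. Your separate treatment of the repeated pair $a_ia_i$, which indeed falls outside the literal cases of Lemma \ref{r0}(ii), is a sound supplement that the paper silently omits (the inspection should also dismiss $g_2,g_5,g_6,g_9$ and misaligned occurrences, which is easy since every leading word of Lemma \ref{gs} containing $r_0$ begins with $r_0$); note also that when some $m_j=0$ the junctions in $u$ are $a_ia_j$ with $i>j$, and the interface may be $a_ib_t$ with $i>1$, but your Case-3 and Case-2 verifications extend verbatim via the monotonicity $k_i\leq k_j$, $l_i\geq l_j$ along the chain of Definition \ref{u}.
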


\begin{proof}
The result is easily follows from Lemma ~\ref{r0} and Lemma \ref{lem8}.
\end{proof}

Figure 1.1 shows every possible reduced words for $n=4$.

Let $$w_1=(r_0r_{42}r_{14})^{m_1}(r_0r_{42}r_{13})^{m_2}(r_0r_{43}r_{13})^{m_3}(r_0r_{4}r_{13})^{m_4}r_{01}$$ and $$w_2=(r_0r_{42}r_{14})^{m_1}(r_0r_{43}r_{14})^{m_2}(r_0r_{4}r_{14})^{m_3}(r_0r_{4}r_{13})^{m_4}r_{01}.$$ Then $w_1$ and $w_2$  are two reduced words in $\widetilde{A}_4$. If we take $m_2=m_3=0$ and $m_1=m_4=1$, then the subword $(r_0r_{42}r_{14})(r_0r_{4}r_{13})r_{01}$ is written twice. To avoid this situation, we define the arranged words.

\begin{figure}
  \includegraphics[width=350pt]{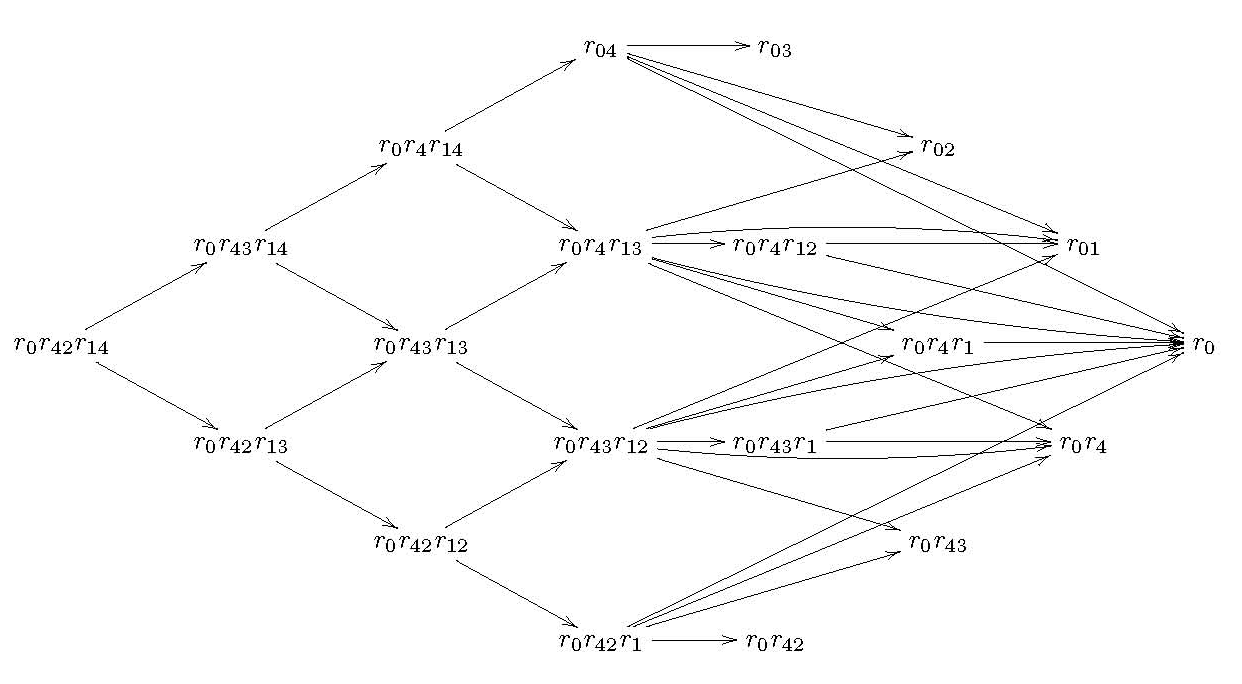}\\
  \caption{1}\label{1}
\end{figure}

\begin{defn}\label{arranged}
 Let $w=u v^{(b_t)}$ be a reduced word where $b_{t}=r_{0}r_{np}r_{1q}$ and
 $$
 u=(a_n)^{(m_n)}(a_{n-1})^{(m_{n-1})} \cdots (a_1)^{(m_1)}.
 $$

 For $i=2,\ldots,n-1$ , let $m_i \geq 1$ if $a_{i+1}=r_{0}r_{nk}r_{1,l+1}$, $a_{i}=r_{0}r_{nk}r_{1l}$ and $a_{i-1}=r_{0}r_{n,k+1}r_{1l}$.  If $a_2=r_{0}r_{nk}r_{1,l+1}, a_1=r_{0}r_{nk}r_{1l}$ and $p>k$, then let $m_1 \geq 1$. Then $w$ is called an arranged word and the components $a_i$ where $m_i \geq 1$ are called a marked component of $w$.
\end{defn}

\begin{thm}
If all reduced words $w=u v^{(b_t)}$ are arranged, then each subword is written uniquely.
\end{thm}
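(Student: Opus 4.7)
The plan is to prove uniqueness by showing that the map from arranged expressions $w = u v^{(b_t)}$ to reduced words is injective. Suppose $w_1 = u_1 v_1^{(b_{t_1})}$ and $w_2 = u_2 v_2^{(b_{t_2})}$ are two arranged expressions yielding the same word in $S^*$; I will argue they must agree formally, so that the $a_i$'s, the $b_i$'s, the exponents $m_i$, and the $\alpha_i$'s all match.

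First, I would observe that any reduced word parses uniquely into its constituent components $r_0 r_{nk} r_{1l}$. Because $r_0 r_0$ is eliminated by $f_1^{(0)}$, each $r_0$ marks the start of a new component; within a component, the descending run $r_n r_{n-1} \cdots r_{k_j}$ is separated from the ascending run $r_1 r_2 \cdots r_{l_j}$ by the unique occurrence of $r_1$ (with the conventions $r_{n,n+1} = r_{10} = 1$ for empty segments). This extracts a canonical sequence of pairs $(k_j, l_j)$ from the word. Since every $a_i$ satisfies $l - k = i - 2 \geq -1$ while every $b_i$ satisfies $q - p \leq -2$, the components with $l_j - k_j \geq -1$ are forced into $u$ and the remaining ones into $v^{(b_t)}$. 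Each marked $a_i$ then corresponds uniquely to a component with $l - k = i - 2$, and the exponent $m_i$ equals the number of consecutive repetitions of that component.

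The crux is therefore the uniqueness of the phantom components (those with $m_i = 0$). Between two consecutive marked components $a_{i_1}$ and $a_{i_2}$ with $i_1 > i_2$, the phantoms $a_{i_1-1}, \ldots, a_{i_2+1}$ trace a monotone lattice path from $(k_{i_1}, l_{i_1})$ to $(k_{i_2}, l_{i_2})$ composed of R-steps (increment $k$) and D-steps (decrement $l$). The arrangement condition $a_{i+1} = (k, l+1),\ a_i = (k, l),\ a_{i-1} = (k+1, l)$ at a phantom $a_i$ translates exactly to the pattern ``arriving via a D-step, leaving via an R-step'', which is forbidden whenever $a_i$ is a phantom. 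Since every position strictly between the two marked endpoints is a phantom, no two consecutive steps of the path can form a D-then-R pair; this immediately forces the path to take all R-steps before all D-steps, a shape uniquely determined by the endpoint difference. The leftmost boundary, where $a_n = (2, n)$ is uniquely forced, and the transition from $a_1$ to $b_t$, where the special clause $p > k$ plays the role of ``leaving via an R-step'', are handled by the same principle. The analogous rigidity for $v^{(b_t)}$ follows from the strictly monotone constraints on its $(p, q)$ sequence.

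The main technical obstacle will be faithfully translating every clause of Definition \ref{arranged} into the single principle ``no D-then-R at a phantom'', particularly the transition between $u$ and $v^{(b_t)}$ whose arrangement condition is phrased in terms of $p > k$ rather than a step pattern. Once this translation is in hand, uniqueness reduces to the elementary combinatorial fact that monotone lattice paths avoiding the pattern D-then-R must be of the form $\mathrm{R}^{a}\mathrm{D}^{b}$.
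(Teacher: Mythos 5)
Your proposal is correct, but it is organized differently from the paper's own argument. The paper compares two arranged words $w_1=u_1v^{(b_t)}$, $w_2=u_2v^{(b_t)}$ sharing the same $v$-part, locates the first level at which their chains diverge, and shows that the chain taking the step $r_0r_{nk}r_{1l}\mapsto r_0r_{nk}r_{1,l-1}$ is forced (by Definition \ref{arranged}) to contain a marked component further down -- either at the turning point where the $k$-value first increases, or at $a_1$ via the clause $p>k$ -- which the other chain cannot contain; hence the common-prefix subword cannot be written in both. You instead prove injectivity by direct reconstruction: parse the concrete word into components via the occurrences of $r_0$, separate the $u$-part from the $v$-part by the sign of $l-k+1$, read off the present components and their multiplicities, and then recover the phantom ($m_i=0$) components by observing that the contrapositive of the marking rule forbids the pattern ``D-step then R-step'' at any phantom level, so each gap between consecutive present components is the unique path $\mathrm{R}^{a}\mathrm{D}^{b}$ determined by its endpoints. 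Both arguments rest on exactly the same combinatorial principle (the contrapositive of Definition \ref{arranged}); what your route buys is a uniform treatment of arbitrary exponents, of possibly different $v$-parts, and of chains that re-converge after diverging -- points the paper's proof, which only treats the subword $a_na_{n-1}\cdots a_j$ with unit exponents and a common $v^{(b_t)}$, leaves implicit. One point you should make explicit when carrying out the junction case: uniqueness of the phantom tail below the lowest present component needs the standing hypothesis $p\geq k$ of Proposition \ref{w}, so that the failure of $p>k$ at a phantom $a_1$ forces $p=k$ and hence pins down where the R-steps stop; without invoking $p\geq k$ the all-R tail and the tail ending in D-steps could coexist. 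This is the same fact the paper uses when it writes $p\geq\overline{k}>k$, so it is available, but your ``same principle'' phrasing glosses over it.
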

\begin{proof}
Let $w_1=u_1 v^{(b_t)}$ and $w_2=u_2v^{(b_t)}$ be two arranged word. Since the only possibility for $a_n=r_0r_{n2}r_{1n}$, the first components of them are the same. Let us assume that $a_n,a_{n-1},\ldots,a_j$ are common in $w_1$ and $w_2$. If the exponents of $a_i$ for $i=j,\ldots,n$ were $1$ and the others were $0$
in both $w_1$ and $w_2$, then the word $a_na_{n-1}\cdots a_j$ would be written twice.

Let us assume $a_j=r_0r_{nk}r_{1l}$. Moreover, let  $a_{j-1}=r_0r_{n,k+1}r_{1l}$ in $w_1$ and $a_{j-1}=r_0r_{nk}r_{1,l-1}$ in $w_2$. Then there exits $s \geq l-1$ the components of $w_2$ between $j-1$ and $s+1$ are
 $$
 (r_{0}r_{nk}r_{1,l-1})(r_{0}r_{nk}r_{1,l-2})\cdots (r_{0}r_{nk}r_{1s})(r_{0}r_{n,k+1}r_{1s})
 $$
 when $s-k>-1$. If $s-k=-1$, we have
 $$
 (r_{0}r_{nk}r_{1,l-1})(r_{0}r_{nk}r_{1,l-2})\cdots (r_{0}r_{nk}r_{1s})(r_{0}r_{np}r_{1q})
 $$

 Therefore $r_0r_{nk}r_{1s}$ is a marked component in the first case. By Definition \ref{u}, $r_0r_{nk}r_{1l}$ can not be a component of $w_1$. The component of $w_1$ in the same position can be $r_0r_{n\overline{k}}r_{1 \overline{s}}$ where $ \overline{k}> k$ and $\overline{s}<s$. Since $b_t$ is common in both words, $p \geq \overline{k} >k$. Therefore $r_0r_{nk}r_{1s}$ is  also a marked component in the second case.Hence the word $(a_n)a_{n-1}\cdots (a_j)$ can be written only in $w_1$ not in $w_2$.

\end{proof}

\section{Counting Reduced Words}

\begin{thm}\label{reduced}
Let $\widehat{w}=a_{i_{s}}a_{i_{s-1}}\ldots a_{i_{1}}v^{(b_t)}$ where $a_{i_{j}}=r_{0}r_{nk_{j}}r_{nl_{j}}$.Then $a_{i_{j}}$'s are marked components of an arranged word $w=uv^{(b_t)}$ where $b_t=r_0r_{np}r_{1q}$ if and only if $k_{s}<k_{s-1}<\ldots<k_{1}<p$ and $q<l_{1}<l_{2}<\ldots<l_{s}<n$
\end{thm}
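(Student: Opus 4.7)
My plan is to interpret the sequence $a_n, a_{n-1}, \ldots, a_1$ underlying $u$ as a monotone lattice path in the $(k,l)$-plane, starting at the forced node $a_n = (2, n)$, where each index-decrement $a_i \to a_{i-1}$ is either a right step ($k \mapsto k+1$, $l$ fixed) or a down step ($l \mapsto l-1$, $k$ fixed); this matches the two-case recursion in Definition~\ref{u}. Under this identification the marked-component condition of Definition~\ref{arranged} has a clean geometric meaning: for $2 \leq i \leq n-1$, the component $a_i$ at $(k, l)$ is marked exactly when the path enters it by a down step and exits it by a right step (a ``$\Gamma$-corner''); and $a_1$ is marked exactly when it is entered by a down step and additionally $p > k_{a_1}$.

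For the ``only if'' direction, assume $a_{i_s}, \ldots, a_{i_1}$ are the marked components of an arranged word $w = uv^{(b_t)}$. Between two consecutive marked corners $a_{i_{j+1}}$ and $a_{i_j}$ (with $i_{j+1} > i_j$), the marked condition at $a_{i_{j+1}}$ forces the first subsequent step to be right, so the path visits $(k_{j+1}+1, l_{j+1})$, while the marked condition at $a_{i_j}$ forces the last preceding step to be down, so it visits $(k_j, l_j+1)$; monotonicity then gives $k_j \geq k_{j+1}+1$ and $l_{j+1} \geq l_j + 1$, which are the strict chains $k_s < \cdots < k_1$ and $l_1 < \cdots < l_s$. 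The bound $l_s < n$ is forced because the marked condition at $a_{i_s}$ requires $(k_s, l_s+1)$ to be a valid position, hence $l_s + 1 \leq n$; the bound $q < l_1$ follows from Proposition~\ref{w}'s reducibility condition $q < l_{a_1}$ together with $l_{a_1} \leq l_1$ (by $l$-monotonicity); and $k_1 < p$ is either the defining marked condition on $a_1$ (when $i_1 = 1$) or, when $i_1 > 1$, follows from $k_{a_1} \geq k_1 + 1$ (forced by the right step out of $a_{i_1}$) combined with $p \geq k_{a_1}$ from Proposition~\ref{w}.

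Conversely, given $k_s < \cdots < k_1 < p$ and $q < l_1 < \cdots < l_s < n$, I construct an arranged word explicitly. Build the lattice path from $(2, n)$ by moving right to $(k_s, n)$ and down to $(k_s, l_s)$, then for each $j = s, s-1, \ldots, 2$ moving right from $(k_j, l_j)$ to $(k_{j-1}, l_j)$ and down to $(k_{j-1}, l_{j-1})$; by construction each $(k_j, l_j)$ is entered by down and exited by right, hence is a $\Gamma$-corner. For the terminal node: if $i_1 = 1$, the path already ends at $(k_1, k_1-1) = a_1$ entered by down, so $p > k_1$ together with $q < l_1 = k_1 - 1$ delivers both the marked status of $a_1$ and reducibility via Proposition~\ref{w}; if $i_1 > 1$, extend from $(k_1, l_1)$ by right steps to $(k_{a_1}, l_1)$ and down steps to $(k_{a_1}, k_{a_1}-1) = a_1$, using the unified choice $k_{a_1} = \min(l_1+1, \, p)$. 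This choice covers both subcases: either $k_{a_1} = l_1 + 1 \leq p$, in which case $a_1$ is entered by a right step and is automatically unmarked; or $k_{a_1} = p \leq l_1$, in which case $a_1$ is entered by a down step but $p \not> k_{a_1}$, so $a_1$ is again unmarked. In either subcase the reducibility hypotheses $p \geq k_{a_1}$ and $q < l_{a_1}$ of Proposition~\ref{w} hold, using $p \geq q + 2$ (from $q - p < -1$ in the definition of $b_t$) when $l_{a_1} = p - 1$. Setting $m_{i_j} = 1$ at each marked position and $m_i = 0$ elsewhere yields the arranged word $w = uv^{(b_t)}$ with precisely the prescribed marked components. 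The main delicacy I anticipate is exactly this treatment of the terminal node --- making $a_1$ marked iff $i_1 = 1$ while simultaneously satisfying the reducibility constraints --- which is what forces the two-subcase structure of the last segment of the path.
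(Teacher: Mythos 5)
Your proposal is correct and follows essentially the same route as the paper: the converse is proved by exactly the paper's explicit construction (right-steps-then-down-steps between consecutive marked corners, starting from $r_0r_{n2}r_{1n}$), and the forward direction is the path-monotonicity argument the paper compresses into ``by construction of arranged words.'' The only difference is that you spell out two points the paper leaves implicit --- the forced-corner argument for the strict inequalities and the choice of the segment of the path below the last marked component (your $k_{a_1}=\min(l_1+1,p)$), which is needed so that $a_1$ does not become an unintended marked component while Proposition~\ref{w} still applies --- so your write-up is a more detailed version of the same proof rather than a different one.
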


\begin{proof}
By construction of arranged words, the marked components satisfy the given conditions.
Conversely, if $\widehat{w}=a_{i_{s}}a_{i_{s-1}}\ldots a_{i_{1}}v^{(b_t)}$ is a word satisfying the conditions, then an arranged word $(w=u v^{(b_{t})})$ whose marked components are $a_{i_j}$'s can be obtained as follows:

The components up to $r_0r_{nk_s}r_{1l_s}$ are $$(r_{0}r_{n2}r_{1n})\cdots(r_{0}r_{nk_{s}}r_{1n})(r_{0}r_{nk_{s}}r_{1,n-1})\cdots(r_{0}r_{nk_{s}}r_{1l_{s}}),$$
the components between $(r_{0}r_{nk_{j}}r_{1l_{j}})$ and $(r_{0}r_{nk_{j-1}}r_{1l_{j-1}})$ are
 $$(r_{0}r_{nk_{j}}r_{1l_{j}})\cdots(r_{0}r_{nk_{j-1}r_{1l_{j}}})(r_{0}r_{nk_{j-1}}r_{1,l_{j}-1})\cdots(r_{0}r_{nk_{j-1}}r_{1l_{j-1}})$$ and the last part of the word$w=u v^{(b_{t})}$ is
$$(r_{0}r_{nk_{2}}r_{1l_{2}})\cdots(r_{0}r_{nk_{1}r_{1l_{2}}})(r_{0}r_{nk_{1}}r_{1,l_{2}-1})\cdots(r_{0}r_{nk_{1}}r_{1l_{1}})(v^{(b_{t})}).$$
\end{proof}

Therefore the number of the elements in an arranged word $w=u v^{(b_t)}$ given by the generating function
$$
\frac{x^\alpha}{(1-x^{2n})(1-x^{2n-1})\cdots (1-x^{n+1})}
$$
where $\alpha$ is the length of the word $\widehat{w}$. In order to count all reduced words starting with $r_0$ we have to find the number words $\widehat{w}$ whose length is $\alpha$ for any power $ \alpha$. To do this, we will find a correspondence between these words and some special partitions of integers.

If $m$ is a positive integer, then a partition of $m$ is a nonincreasing sequence of positive integers $p_{1},p_{2},\ldots,p_{k}$ whose sum is $m$. Each $p_{i}$ is called a part of the partition. Let $n$ be a positive integer. Any partition $m=d_{1}+d_{2}+\ldots+d_{k}$ where $k\leq n$ can be identify by the $n-$tuple $(d_{1},d_{2},\ldots,d_{k},0,0,\ldots,0)$.

We can also represent each word $r_0r_{nk}r_{1l}$ with the n-tuple $(k,1,\ldots,1,0,\ldots,0)$, $r_{nk}$ with  $(k,\ldots,0)$ and $r_{0l}$ with $(1,1\ldots,1,0,\ldots,0)$ where number one 1's is equal to $l$ for $2 \leq k \leq n-1, 1 \leq l \leq n$.

\begin{defn} Let $n$ be a positive integer. The $n$-tuples $(k,1\ldots,1,0,\ldots,0)$ where number of 1's is $l$ for $1\leq k\leq n$, $1\leq l\leq n-1$ are called basic partitions. The basic partition $(k_{1},1\ldots,1,0\ldots,0)$ is said to be connected to the basic partition $(k_{2},1,\ldots,1,0,\ldots,0)$ if $k_{1}>k_{2}$ and the number of 1's in the first one is greater than number of 1's in the second one. Hence a sequence of connected partition $a_{1},a_{2},\ldots,a_{m}$ corresponds to a word $\widehat{w}$ given in Theorem \ref{reduced}.

\end{defn}

\begin{thm}\label {qbinomial}
There is one to one correspondence between words $\widehat{w}$ and the partitions in which there are at most $n$ parts and in which no parts is larger than $n$.
\end{thm}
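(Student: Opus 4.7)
The plan is to realize the bijection through the Frobenius coordinates of a partition. Reading $\widehat{w}=a_{i_{s}}\cdots a_{i_{1}}v^{(b_{t})}$ from left to right, I would extract its sequence of components $r_{0}r_{nk_{j}}r_{1l_{j}}$ for $j=1,\ldots,m$. Theorem~\ref{reduced} combined with the definition of $v^{(b_{t})}$ forces $(k_{j})$ to be strictly increasing and $(l_{j})$ to be strictly decreasing, with $k_{j}\in\{2,\ldots,n+1\}$ and $l_{j}\in\{0,\ldots,n-1\}$; the upper bound $l_{j}<n$ follows from $l_{s}<n$ (for the marked components) and from $q_{t}<p_{t}-1\le n$ (for the $b$-components in $v^{(b_{t})}$), with the remaining tail components having even smaller $l$-values.

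Next, I would reparameterize $K_{j}:=n+2-k_{j}\in\{1,\ldots,n\}$ and $L_{j}:=l_{j}\in\{0,\ldots,n-1\}$, so that both sequences are strictly decreasing:
\[
n\ge K_{1}>K_{2}>\cdots>K_{m}\ge 1,\qquad n-1\ge L_{1}>L_{2}>\cdots>L_{m}\ge 0.
\]
This is precisely the Frobenius coordinate data of a partition inside the $n\times n$ box, so I define the bijection $\widehat{w}\mapsto\lambda$ by declaring $\lambda$ to be the unique partition with Frobenius coordinates $(K_{1}-1,\ldots,K_{m}-1\mid L_{1},\ldots,L_{m})$; equivalently, $\lambda$ is obtained by placing the hook $(K_{j},1^{L_{j}})$ at the $j$-th diagonal cell for $j=1,\ldots,m$. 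The constraints $K_{1}\le n$ and $L_{1}\le n-1$ translate into $\lambda_{1}\le n$ and $\lambda'_{1}\le n$, so $\lambda$ has at most $n$ parts each of size at most $n$, as required.

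The inverse is then immediate: given a partition $\lambda$ in the $n\times n$ box with Frobenius coordinates $(a_{1}>\cdots>a_{d}\mid b_{1}>\cdots>b_{d})$, set $K_{j}=a_{j}+1$, $L_{j}=b_{j}$, recover the monotonic pair sequence $(k_{j},l_{j})=(n+2-K_{j},L_{j})$, and concatenate the corresponding components. The structural split back into $a_{i_{s}}\cdots a_{i_{1}}v^{(b_{t})}$ is canonical: take $b_{t}$ to be the leftmost component satisfying $l_{j}<k_{j}-1$ (or the degenerate $b_{t}=1$, $v^{(1)}=1$ if none exists), let the earlier components become the marked $a$'s, and let the later ones fill out $v^{(b_{t})}$. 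As a consistency check, each component $r_{0}r_{nk_{j}}r_{1l_{j}}$ has length $1+(n-k_{j}+1)+l_{j}=K_{j}+L_{j}$, so $|\widehat{w}|=\sum_{j}(K_{j}+L_{j})=|\lambda|$, confirming the map is length-preserving and hence ready for the forthcoming generating-function computation. The main delicate step will be verifying that this canonical split back into the structural form $a_{i_{s}}\cdots a_{i_{1}}v^{(b_{t})}$ is well-defined and unique, which amounts to showing that the decomposition of a valid component sequence into ``marked $a$'s'' versus ``tail of $v^{(b_{t})}$'' is forced by the conditions appearing in Theorem~\ref{reduced} and the definition of $v^{(b_{t})}$.
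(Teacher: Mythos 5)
Your proof is correct, and it lands on the same underlying bijection as the paper -- the word $\widehat w$ becomes the partition whose diagonal hooks are its components -- but you reach it by a different route. The paper first converts each component $r_0r_{nk}r_{1l}$ into a ``basic partition'' and then assembles the sequence through the operation $\bigoplus_i a_i=\sum_i\sigma^{i-1}(a_i)$, proving the resulting shape by induction on the number of components and inverting the map by repeatedly peeling a hook off the partition; you instead recognize the data $K_j=n+2-k_j$, $L_j=l_j$ as Frobenius coordinates and quote the standard hook decomposition. Your version buys two things. First, the explicit reparameterization $K_j=n+2-k_j$ quietly repairs a slip in the paper, whose literal identification of $r_0r_{nk}r_{1l}$ with the tuple $(k,1,\ldots,1,0,\ldots,0)$ is neither consistent with its own range $1\le k\le n$ for basic partitions nor length-preserving, whereas your check $|\widehat w|=\sum_j(K_j+L_j)=|\lambda|$ is exactly what the subsequent generating-function computation needs. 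Second, the inverse becomes transparent: along $\widehat w$ the quantity $l_j-k_j$ strictly decreases, components eligible to be marked must satisfy $l-k\ge -1$ (Definition \ref{u}), and components of $v^{(b_t)}$ must satisfy $l-k\le -2$, so the split you describe (letting $b_t$ be the leftmost component with $l<k-1$, or $b_t=1$ if there is none) is forced and unique; writing down those two inequalities is all that is needed to close the one step you left deferred, so there is no genuine gap. What the paper's approach buys in exchange is self-containedness: it never presupposes the Frobenius correspondence, at the cost of the inductive computation with cyclic shifts and the explicit peeling algorithm for the inverse.
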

\begin{proof}
Since we identify each word $\widehat{w}$ with a sequence of connected basic partitions, we must find a correspondence between sequences of connected partitions and the partitions fit into a box of size $n \times n$. Let $a_{1},a_{2},\ldots,a_{m}$ be a sequence of connected partitions where $a_{i}=(k_{i},\underaccent{l_{i}}{\underbrace{1,\ldots,1}}0,\ldots,0)$. Hence $k_{i}>k_{j}$ and $l_{i}>l_{j}$ for $1 \leq i < j\leq m$.

Define
$$\bigoplus_{i=1}^{m}a_{i}=\sum_{i=1}^{m}\sigma_{1}^{i-1}(a_{i})$$

where $\sigma(p_{1},p_{2},\ldots,p_{n-1},p_{n})=(p_{n},p_{1},p_{2},\ldots,p_{n-1})$. Then

$$ \bigoplus_{i=1}^{m}a_{i}=(k_{1},k_{2}+1,\ldots,k_{m}+m-1,\underaccent{l_{m}}{\underbrace{m,\ldots,m}},
 \underaccent{l_{m-1}-(l_{m}+1)}{\underbrace{m-1,\ldots,m-1}},\ldots,\underaccent{l_{1}-(l_{2}+1)}{\underbrace{1,\ldots,1}},0\ldots,0)$$

We prove the last equation by induction on $m$.

Let $m=2$, Since $l_1 -1 \geq l_2$,

\begin{eqnarray*}
  a_{1}\oplus a_{2} &=& (k_{1},\underaccent{l_{1}}{\underbrace{1,\ldots,1}},0,\ldots,0)+(0,k_{2},\underaccent{l_{2}}{\underbrace{1,\ldots,1}},0,\ldots,0)  \\
 &=& (k_{1},k_{2}+1,\underaccent{l_{2}}{\underbrace{2,\ldots,2}},\underaccent{l_{1}-(l_{2}+1)}{\underbrace{1,\ldots,1}},0,\ldots,0)
\end{eqnarray*}

Let us assume that $$\bigoplus_{i=1}^{m-1}a_{i}=(k_{1},k_{2}+1,\ldots,k_{m-1}+m-2,\underaccent{l_{m-1}}{\underbrace{m-1,\ldots,m-1}},\ldots,\underaccent{l_{1}-(l_{2}+1)}{\underbrace{1,\ldots,1}},0,\ldots,0).$$

\begin{eqnarray*}
  \bigoplus_{i=1}^{m}a_{i} &=& \sum_{i=1}^{m}\sigma^{i-1}(a_{i}) \\
   &=& \sum_{i=1}^{m-1}\sigma^{i-1}(a_{i})+\sigma^{m-1}(a_{m})  \\
   &=& \bigoplus_{i=1}^{m-1}a_{i}+\sigma^{m-1}(a_{m}) \\
   &=& (k_{1},k_{2}+1,\ldots,k_{m-1}+m-2,\underaccent{l_{m-1}}{\underbrace{m-1,\ldots,m-1}},\ldots,\underaccent{l_{1}}{\underbrace{1,\ldots,1}},0,\ldots,0) \\
   &&  + \: (\underaccent{m-1}{\underbrace{0,0,\ldots,0}},k_{m},\underaccent{l_{m}}{\underbrace{1,\ldots,1}},0,\ldots,0) \\
   &=& (k_{1},k_{2}+1,\ldots,k_{m}+m-1,\underaccent{l_{m}}{\underbrace{m,\ldots,m}},\ldots,\underaccent{l_{1}-(l_{2}+1)}{\underbrace{1,\ldots,1}},0,\ldots,0)
\end{eqnarray*}

The last equality easily follows from the fact $l_{m-1}-1 \geq l_{m}$. Since $n \geq k_1 \geq k_2+1 \geq \cdots > k_{m}+(m-1) \geq m$, the last line corresponds to a partition of $2n$ fits into a $n$ by $n$ box.

Conversely let $m=(m_1,m_2,\ldots,m_n)$ be a partition where $n \geq m_1 \geq m_2 \geq \cdots \geq m_n \geq 0$. If $i_1$ is the last index such that $m_{i_1} \neq 0$, then let $a_1=(m_1,1,\ldots,1,0,\ldots0)$ where the last $1$ in $i_1$-th position. Then let $$x=\sigma^{-1}(m-a_1)=(m_2-1,\ldots,m_{i_1}-1, 0,\ldots,0)$$ and $a_2=(m_2-1,\ldots,1,0,\ldots,0)$ where the position of the last nonzero element in $x$ and the position of last $1$ in $a_2$ are same. Clearly, $a_1$ and $a_2$ are basic partitions and $a_1$ is connected to $a_2$. Continuing the process until reaching the $(0,\ldots,0)$, one can obtain a sequence of connected basic partitions.
\end{proof}

\begin{defn}

Let be positive integers. The q-binomial is defined by
$$
\left(
  \begin{array}{c}
    m \\
    r \\
  \end{array}
\right)_q=\frac{(1-q^m)(1-q^{m-1}) \cdots (1-q^{m-r+1})}{(1-q)(1-q^2) \cdots (1-q^r)}
$$
\end{defn}

Although the formula in the first clause appears to involve a rational function, it actually designates a polynomial, because the division is exact in $\mathbb{Z}[q]$.
A standard combinatorial interpretation for q-binomial is that it counts the number of partitions that will fit into a box of size $ k \times (n - k)$, weighted by the size of the partition. In particular the q-binomial
$$
\left(
  \begin{array}{c}
    2n \\
    n \\
  \end{array}
\right)_x
=\frac{(1-x^{2n})(1-x^{2n-1}) \cdots (1-x^{n+1})}{(1-x)(1-x^2) \cdots (1-x^{n})}$$
counts the number of partitions in which there are at most $n$ parts and in which no parts is larger than $n$.

Now, we can proof the main result of this paper.

\begin{thm}
  Then the reduced Gr\"{o}bner-Shirshov basis of the affine Weyl group $\widetilde{A}_n$ is the set $G'$. Moreover all the reduced words are the form $rw$ where $r$ is a reduced word not including $r_0$  and $w$ is a arranged word.
\end{thm}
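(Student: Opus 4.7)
The plan is to apply the Composition-Diamond Lemma in form (iii): once we show that the set $Red(G')$ of $G'$-reduced words has generating function equal to the Poincar\'{e} polynomial
$$
P_{\widetilde{A}_n}(x) = \frac{[n+1]_x!}{\prod_{i=1}^{n}(1-x^i)}
$$
of $\widetilde{A}_n$, then $G'$ automatically forms a Gr\"{o}bner-Shirshov basis, since $Red(G')$ always spans the group algebra and matching dimensions forces it to be a basis. Reducedness of this basis will then follow by a direct, case-by-case inspection of the families $f_1,\ldots,f_4,g_1,\ldots,g_{10}$, checking that no leading word appears as a subword of another and that each non-leading tail is itself $G'$-reduced.

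First we would establish the canonical decomposition $W = rw$ for every $W \in Red(G')$, together with its uniqueness. A direct look at $G'$ shows that every leading word involving $r_0$ in fact begins with $r_0$, so splitting $W$ at the first occurrence of $r_0$ (taking $w$ empty if none occurs) produces a prefix $r$ without $r_0$ and a suffix $w$ starting with $r_0$. Lemma \ref{ri} characterizes such prefixes; Lemma \ref{rir0} ensures the concatenation remains reduced; and Proposition \ref{w}, Definition \ref{arranged}, together with the uniqueness theorem immediately preceding Theorem \ref{reduced}, force the suffix to be an arranged word written in exactly one way.

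Next we would assemble the generating function by length. Reduced words $r$ without $r_0$ contribute, by Lemma \ref{ri}, the factor
$$
\prod_{i=1}^{n}\bigl(1+x+x^2+\cdots+x^i\bigr) = [n+1]_x!.
$$
In an arranged word $w = uv^{(b_t)}$ each component $a_i$ has length $n+i$, so the free choice of exponent $m_i \geq 0$ contributes $(1-x^{n+i})^{-1}$, while the skeleton $\widehat{w}$ contributes $\sum_{\widehat{w}} x^{|\widehat{w}|}$; by Theorem \ref{qbinomial} the latter sum equals the $q$-binomial $\binom{2n}{n}_x$. Assembling gives
$$
[n+1]_x! \cdot \binom{2n}{n}_x \cdot \prod_{i=1}^{n}\frac{1}{1-x^{n+i}} = \frac{[n+1]_x!}{\prod_{i=1}^{n}(1-x^i)} = P_{\widetilde{A}_n}(x),
$$
so $Red(G')$ matches $\widetilde{A}_n$ length by length and both assertions of the theorem follow.

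The step I expect to be the main obstacle is ensuring the bijection of Theorem \ref{qbinomial} is length-preserving: the word length $|\widehat{w}|$ must match the size of the corresponding partition in the $n \times n$ box, since without this the crucial cancellation $\binom{2n}{n}_x \prod_{i=1}^n (1-x^{n+i})^{-1} = \prod_{i=1}^n (1-x^i)^{-1}$ does not upgrade to an identity of generating functions, and the identification with $P_{\widetilde{A}_n}(x)$ collapses. The residual reducedness check, while long, is mechanical given the explicit leading words of $G'$.
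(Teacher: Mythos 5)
Your proposal is correct in the sense that it reproduces the paper's own argument: the same $rw$ decomposition with Lemma \ref{ri} counting the $r_0$-free prefixes, the same count of arranged words via Theorem \ref{reduced} and the $q$-binomial correspondence of Theorem \ref{qbinomial}, the same cancellation yielding the Poincar\'e polynomial, and the same appeal to the Composition-Diamond Lemma. The only differences are presentational---you spell out the spanning-plus-dimension-count mechanism and explicitly flag the weight-preservation requirement in Theorem \ref{qbinomial}---while, exactly like the paper, you leave the exhaustiveness step (that every reduced word beginning with $r_0$ must be an arranged word) resting on citations (Proposition \ref{w}, the uniqueness theorem) that only establish the converse direction.
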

The reduced words not including $r_0$ is given in Lemma \ref{ri}. It is easy to see that the number of such words given by the generating function
$$
(1+x)(1+x+x^2)\cdots(1+x+ \cdots +x^n).
$$
Theorem \ref{reduced} and Theorem \ref{qbinomial} imply that the number of arranged words given by the generating function

\begin{eqnarray*}
  \frac{\left(
  \begin{array}{c}
    2n \\
    n \\
  \end{array}
\right)_x}{(1-x^{2n})(1-x^{2n-1})\cdots (1-x^{n+1})} &=& \frac{\frac{(1-x^{2n})(1-x^{2n-1}) \cdots (1-x^{n+1})}{(1-x)(1-x^2) \cdots (1-x^{n})}}{(1-x^{2n})(1-x^{2n-1})\cdots (1-x^{n+1})} \\
   &=& \frac{1}{(1-x)(1-x^2)\cdots(1-x^n)}.
\end{eqnarray*}

By Lemma \ref{rir0}, the reduced words of $\widetilde{A}_n$ are in the form $rw$. Hence the number of reduced words given by the generating function
$$
\frac{(1+x)(1+x+x^2)\cdots(1+x+ \cdots +x^n)}{(1-x)(1-x^2)\cdots(1-x^n)}
$$
which is well known Poincar\'{e} polynomial of the affine Weyl group  $\widetilde{A}_n$. (see \cite{humphreys}). Therefore these are all reduced words of $\widetilde{A}_n$.
Hence by Composition-Diamond Lemma, $G'$ is a Gr\"{o}bner-Shirshov basis of  $\widetilde{A}_n$. In fact, $G'$ is a reduced  Gr\"{o}bner-Shirshov basis.


\end{document}